\documentclass[11pt]{amsart}
\usepackage{latexsym}
\usepackage{amsmath,amssymb,amsfonts}
\usepackage{graphics}
\usepackage[all]{xy}

\def\Bbb{\mathbb}

\newtheorem{thm}{Theorem}[section]

\newenvironment{rmk}{\mbox{ }\\{\bf  Remark}\mbox{ }}{
\hfill $\Box$\mbox{}\bigskip}

\def\blacksquare{\hbox to .60em {\vrule width .60em height .60em}}

\textwidth=5in
\textheight=7.5in

\begin{document}
\renewcommand{\theequation}{\thesection.\arabic{equation}}

\title[Liouville-type theorems and applications to geometry]{Liouville-type theorems and applications to geometry on complete Riemannian manifolds}

\author{Chanyoung Sung}
\date{\today}
%\date{August 30, 2001}
\address{Dept. of Mathematics and Institute for Mathematical Sciences \\
Konkuk University\\
         1 Hwayang-dong, Gwangjin-gu, Seoul, KOREA}
%\curraddr{Department of Mathematics\\
%          Seoul National University\\
%          Seoul 151-742 Korea}
\email{cysung@kias.re.kr}
\thanks{This work was supported by the National Research Foundation(NRF) grant funded by the Korea government(MEST). (No. 2009-0074404)}

%\subjclass{Primary: 57R57, 58E40; Secondary: 58E11}
\keywords{subharmonic function, Liouville theorem, Seiberg-Witten
equations, Yamabe problem, isometric immersion}

\subjclass[2010]{31B05, 57R57, 53A30, 53C42}

\begin{abstract}
On a complete Riemannian manifold $M$ with Ricci curvature
satisfying $$\textrm{Ric}(\nabla r,\nabla r) \geq -Ar^2(\log
r)^2(\log(\log r))^2\cdots (\log^{k}r)^2$$ for $r\gg 1$, where
$A>0$ is a constant, and $r$ is the distance from an arbitrarily
fixed point in $M$. we prove some Liouville-type theorems for a
$C^2$ function $f:M\rightarrow \Bbb R$ satisfying $\Delta f\geq
F(f)$ for a function $F:\Bbb R\rightarrow \Bbb R$.

As an application, we obtain a $C^0$ estimate of a spinor
satisfying the Seiberg-Witten equations on such a manifold of
dimension 4. We also give applications to the Yamabe problem and isometric immersions of such a manifold.
\end{abstract}
\maketitle
%\vfill
%\pagebreak
\setcounter{section}{0}
\setcounter{equation}{0}

\section{Introduction}

According to Liouville's theorem, any $f\in C^2(\Bbb R^2)$ which is subharmonic ($\Delta f\geq 0$)
and bounded above must be constant. This is not true in higher dimensions,
but various types of extensions to general complete Riemannian manifolds have been found.
We are here concerned with the case $\Delta f\geq F(f)$, where $\Delta$ is the Laplace-Beltrami operator and $F$ is a real-valued function on $\Bbb R$.

L. Karp \cite{karp} showed that on a complete Riemannian manifold
$M$ satisfying $$\limsup_{r\rightarrow \infty}\frac{\log
\textrm{Vol}\ B(p,r)}{r^2} < \infty$$ for a point $p\in M$ where
$B(p,r)$ is the geodesic ball centered at $p$ with radius $r$,
there exists no $f\in C^2(M)$ which is strongly subharmonic
($\Delta f\geq c$ for a constant $c>0$) and bounded above. (This
volume growth condition holds when the Ricci curvature satisfies
$$\textrm{Ric} \geq -A(1+r^2)$$ where $A>0$ is a constant and $r$ is the distance from $p$.)

S.M. Choi, J.H. Kwon, and Y.J. Suh \cite{suh} proved that on a complete Riemannian manifold with Ricci curvature bounded below, every nonnegative $C^2$ function $f$ satisfying $\Delta f \geq c f^d$ for constants $c>0$ and $d>1$ must vanish identically.

In this article, we consider the general type $F(f)$ to prove :
\begin{thm}\label{th1}
Let $M$ be a smooth complete Riemannian manifold with Ricci curvature satisfying $$\textrm{Ric}(\nabla r,\nabla r) \geq -Ar^2(\log r)^2(\log(\log r))^2\cdots (\log^{k}r)^2$$ for $r\gg 1$, where $A>0$ is a constant, $r$ is the distance from an arbitrarily fixed point in $M$, and $\log^{k}$ denotes the $k$-fold composition of $log$.

Suppose that a $C^2$ function $f:M\rightarrow \Bbb R$ is bounded below and satisfies $\Delta f\geq F(f)$ for a real-valued function $F$ on $\Bbb R$.
\begin{itemize}
\item If $\liminf_{x\rightarrow \infty}
    \frac{F(x)}{x^{\nu}}>0$ for some $\nu>1$, then $f$ is
    bounded such that $F(\sup f)\leq 0$.
\item If $\liminf_{x\rightarrow \infty}
    \frac{F(x)}{x^{\nu}}\leq 0$ for any $\nu>1$, then $\sup
    f=\infty$  or $f$ is bounded such that $F(\sup f)\leq 0$.
\end{itemize}
\end{thm}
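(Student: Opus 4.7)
\emph{Proof plan.} The cornerstone is an Omori--Yau weak maximum principle at infinity (OYMP): on any complete Riemannian manifold whose radial Ricci curvature satisfies the stated iterated--logarithmic lower bound, for every $C^2$ function $u:M\to\Bbb R$ with $u^{*}:=\sup_M u<\infty$ there exists a sequence $\{x_n\}\subset M$ with
\[
u(x_n)\to u^{*},\qquad |\nabla u|(x_n)\to 0,\qquad \limsup_{n\to\infty}\Delta u(x_n)\leq 0.
\]
To prove this, I would follow the Yau--Pigola--Rigoli--Setti recipe: build an exhaustion $\gamma:M\to[0,\infty)$ of the nested--logarithm shape $\gamma(x)=\log^{k+1}r(x)$ for $r(x)\gg 1$ (with $r$ the distance from the basepoint), verify via Bishop--Gromov--style Laplacian comparison with the rotationally symmetric model that $\Delta\gamma\leq C$ outside a compact set, and then apply the classical maximum principle to $u-\varepsilon\gamma$ on an exhausting family of geodesic balls. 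The iterated--log Ricci bound is precisely what this comparison needs.

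With OYMP available, the case $\sup f<\infty$ is immediate: applying it to $f$ gives $F(f(x_n))\leq\Delta f(x_n)$ and $\limsup\Delta f(x_n)\leq 0$, so continuity of $F$ at $\sup f$ yields $F(\sup f)\leq 0$. This entirely settles the second bullet and half of the first. To rule out $\sup f=\infty$ under the first--bullet hypothesis $F(x)\geq c_0 x^\nu$ for $x$ large with some $\nu>1$, introduce the Keller--Osserman transform
\[
G(t):=\int_{t_0}^{t}F(s)\,ds,\qquad K(t):=\int_{t}^{\infty}\frac{ds}{\sqrt{2G(s)}}\qquad(t\geq t_0),
\]
and extend $K$ smoothly to $\Bbb R$ as a bounded strictly decreasing function; the integral defining $K$ converges exactly because $\nu>1$, and this is what separates the two bullets analytically. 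Supposing for contradiction that $\sup f=\infty$, set $u:=K(f)$, so $u$ is bounded with $\inf_M u=K(\infty)=0$. A direct computation from $K'=-(2G)^{-1/2}$, $K''=F(2G)^{-3/2}$, and $|\nabla u|^2=|\nabla f|^2/(2G(f))$ yields
\[
\Delta u=\frac{1}{\sqrt{2G(f)}}\bigl(-\Delta f+F(f)|\nabla u|^2\bigr)\leq -\frac{F(f)}{\sqrt{2G(f)}}\bigl(1-|\nabla u|^2\bigr),
\]
the inequality using $\Delta f\geq F(f)$. Apply OYMP to $-u$: one gets $y_n$ with $u(y_n)\to 0$, $|\nabla u|(y_n)\to 0$, and $\Delta u(y_n)\geq -1/n$. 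But $u(y_n)\to 0$ forces $f(y_n)\to\infty$, and then $F(f(y_n))/\sqrt{2G(f(y_n))}\gtrsim f(y_n)^{(\nu-1)/2}\to\infty$ while $1-|\nabla u|^2(y_n)\to 1$, so the displayed upper bound forces $\Delta u(y_n)\to-\infty$, a contradiction.

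The principal obstacle is the first paragraph: once OYMP is granted, the algebraic Keller--Osserman manipulation above is essentially forced, and the role of $\nu>1$ is only to make $K$ bounded. What requires real work is checking that the iterated--log Ricci bound is still sufficient to carry through the Yau--type exhaustion argument -- i.e.\ that $\Delta\log^{k+1}r\leq C$ can be proven by a careful iteration of Laplacian comparison against the appropriate radial model, and that the resulting sequence from the maximum principle admits the gradient refinement $|\nabla u|(x_n)\to 0$ used in the last step.
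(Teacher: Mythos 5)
Your overall architecture is sound and genuinely different from the paper's. The paper runs everything through a single power transform $G=(f+a)^{(1-q)/2}$ with $q>1$: this function is automatically bounded (it lies between $0$ and $(\inf f+a)^{(1-q)/2}$) whether or not $f$ is, so one application of the Omori--Yau minimum principle to $G$ simultaneously yields $F(\sup f)\leq 0$ when $\sup f<\infty$ and a contradiction with $F(x)\geq c_0x^{\nu}$ when $\sup f=\infty$, by choosing $1<q<\nu$ in the resulting inequality $F(f(p_m))/(f(p_m)+a)^{q}<o(1)$. You instead apply the maximum principle directly to $f$ in the bounded case --- which is cleaner --- and invoke a Keller--Osserman change of variable in the unbounded case. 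Both treatments take the Omori--Yau principle under the iterated-logarithm Ricci bound as the external input (the paper simply cites Ratto--Rigoli--Setti rather than reproving it), so your identification of that as the locus of real work is consistent with the paper.

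There is, however, a genuine gap in your unbounded case. You claim $F(f(y_n))/\sqrt{2G(f(y_n))}\gtrsim f(y_n)^{(\nu-1)/2}$, but this requires an upper bound $G(t)\lesssim t^{\nu+1}$, i.e.\ an upper bound on $F$, which the hypothesis $\liminf_{x\to\infty}F(x)/x^{\nu}>0$ does not supply. If $F(t)=c_0t^{\nu}$ except on short intervals where it has enormous smooth spikes, then $G(t)=\int_{t_0}^{t}F$ can grow arbitrarily fast while $F(t)=c_0t^{\nu}$ along a sequence $t\to\infty$, so $F(t)/\sqrt{2G(t)}$ can tend to $0$ along the points that matter; your displayed upper bound on $\Delta u(y_n)$ then yields no contradiction with $\Delta u(y_n)\geq -1/n$ (you need at least $\liminf_{t\to\infty}F(t)/\sqrt{2G(t)}>0$). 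The fix is routine: replace $F$ by the minorant $\tilde F(t):=\min\bigl(F(t),\,c_0\max(t,t_1)^{\nu}\bigr)$, suitably smoothed, which still satisfies $\Delta f\geq\tilde F(f)$, and run the Keller--Osserman construction with $\tilde G=\int\tilde F$, for which $\tilde F/\sqrt{2\tilde G}\sim t^{(\nu-1)/2}$ does hold. The paper's argument sidesteps this entirely because it only evaluates $F$ pointwise along the sequence $p_m$ and never integrates it. (Both you and the paper implicitly use continuity of $F$ at $\sup f$ to pass to the limit; your route additionally needs $F$ continuous for large arguments so that $K(f)$ is $C^2$.)
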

The proof is based on a generalized Omori-Yau maximum principle which holds under the above Ricci curvature condition.

In section 3, we give a similar result for bounded-above $f$, which leads to an improvement of L. Karp's theorem \cite{karp} :
\begin{thm}\label{th1.5}
Let $M$ be as in theorem \ref{th1}.

Then there exists no $f\in C^2(M)$ which
is strongly subharmonic and bounded above, and any  $f\in C^2(M)$ which is nonpositive and satisfies $\Delta f\geq
c|f|^d$ for some positive constants $c$ and $d$ must be identically zero.
\end{thm}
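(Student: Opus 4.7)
\emph{Proof plan.} My plan is to derive both statements from the bounded-above counterpart of Theorem~\ref{th1} promised for Section~3. In the form I shall need, that companion reads: under the Ricci hypothesis on $M$, if $u\in C^{2}(M)$ is bounded above and satisfies $\Delta u\ge F(u)$ for some continuous $F:\mathbb{R}\to\mathbb{R}$, then $F(\sup u)\le 0$. This is a direct consequence of the generalized Omori--Yau maximum principle that powers Theorem~\ref{th1}: one just evaluates the differential inequality along a sequence $\{x_{n}\}$ with $f(x_{n})\to \sup f$ and $\limsup_{n}\Delta f(x_{n})\le 0$.

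The first assertion is then immediate: apply the companion with $u=f$ and $F\equiv c>0$ to obtain the absurd conclusion $c\le 0$. Hence no strongly subharmonic bounded-above $f$ can exist on such an $M$, which extends Karp's theorem to the present curvature regime.

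For the second assertion, apply the same companion with $F(y)=c|y|^{d}$ to get $c|\sup f|^{d}\le 0$; combined with $f\le 0$ this forces $\sup f=0$. Since moreover $\Delta f\ge c|f|^{d}\ge 0$, $f$ is subharmonic on the connected manifold $M$, so if the supremum $0$ is attained at some interior point, Hopf's strong maximum principle promotes $\sup f = 0$ to $f\equiv 0$.

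The main technical obstacle is the residual case in which $f<0$ pointwise and $\sup f=0$ is attained only in the limit. To dispose of it I would pass to $u=-f>0$ and compose with a bounded, smooth, strictly increasing, concave function $\varphi:[0,\infty)\to[0,1)$ (for instance $\varphi(t)=t/(1+t)$). The chain-rule identity $\Delta\varphi(u)=\varphi'(u)\Delta u+\varphi''(u)|\nabla u|^{2}$ combined with $\Delta u\le -cu^{d}$ and $\varphi''<0$ yields the one-sided estimate $\Delta\varphi(u)\le -c\varphi'(u)u^{d}$, in which the gradient term is discarded with the correct sign. Applying the bounded-above companion now to the function $-\varphi(u)$, which is bounded above by $0$, and rewriting the right-hand side as a continuous function of $\varphi(u)$ (via inversion of $\varphi$) produces a forcing that is strictly positive away from its zero, and hence the desired contradiction at $\sup(-\varphi(u))=0$. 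Calibrating $\varphi$ so that this transplanted forcing passes through the companion cleanly, in particular so that $H(0)>0$ in the induced inequality $-\Delta\varphi(u)\ge H(\varphi(u))$, is the delicate step of the argument.
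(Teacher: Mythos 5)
Your first assertion goes through: the companion statement you formulate (if $u$ is bounded above, $\Delta u\ge F(u)$ with $F$ continuous, then $F(\sup u)\le 0$) is a correct consequence of the Omori--Yau maximum principle under the stated Ricci bound, and with $F\equiv c$ it immediately forbids strongly subharmonic bounded-above functions. This is in fact more direct than the paper, which routes the same conclusion through its Theorem \ref{th2}.

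For the second assertion, however, your companion is the wrong tool, and the gap you yourself flag is not a calibration issue but an impossibility. For $f\le 0$ and $F(y)=c|y|^d$ the companion yields only $c|\sup f|^d\le 0$, i.e. $\sup f=0$, which is essentially vacuous for a nonpositive function: the content of the statement lives at $\inf f$, where $f$ could a priori be very negative or even unbounded below. Your proposed repair cannot produce the missing contradiction, because $F(0)=c|0|^d=0$: for any $\varphi$ of the type you describe ($C^1$ up to $0$ with $\varphi(0)=0$, e.g. $t/(1+t)$), the transplanted forcing $H(v)=c\,\varphi'(\varphi^{-1}(v))\bigl(\varphi^{-1}(v)\bigr)^d$ satisfies $H(v)\to 0$ as $v\to 0^+$, so applying the companion to $-\varphi(u)$ at its supremum $0$ returns only $0\le 0$; no choice of $\varphi$ gives $H(0)>0$. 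What is actually needed is a Liouville statement at the \emph{infimum} of a bounded-above function, which is the paper's Theorem \ref{th2}: substituting $G=(-f+a)^{(1-q)/2}$ so that the Omori--Yau test points satisfy $f(p_m)\to\inf f$, one shows both that $f$ is bounded below (using the growth of $c|x|^d$ as $x\to-\infty$, with $\nu<\min(1,d)$) and that $c|\inf f|^d\le 0$, whence $\inf f=0$ and $f\equiv 0$ with no appeal to the strong maximum principle. Your sketch contains neither ingredient --- it never excludes $\inf f=-\infty$ and never evaluates the inequality at $\inf f$ --- so the second assertion remains unproved.
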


In later sections, we discuss the geometric application of these theorems on manifolds of theorem \ref{th1}. We derive a $C^0$ estimate of the spinor satisfying the Seiberg-Witten equations on 4-manifolds, and we give slight improvements on well-known applications such as the Yamabe problem and isometric immersions of such a manifold by using our Liouville-type theorems.

Finally we remark that in all the theorems of this article the Ricci curvature assumption can be replaced by a weaker condition that $M$ admits a tamed-exhaustion, which guarantees the Omori-Yau maximum principle by K.-T. Kim and H. Lee \cite{hanjin}.

\section{Proof of theorem \ref{th1}}
We follow the idea of \cite{suh}. Since $f$ is bounded below, we
take a constant $a$ bigger than $\inf f$. Define a $C^2$ function
$G : M\rightarrow \Bbb R^+$ by  $(f+a)^{\frac{1-q}{2}}$ where
$q>1$ is a constant. By a simple
computation,
\begin{eqnarray}\label{form1}
\nabla G = \frac{1-q}{2}G^{\frac{q+1}{q-1}}\nabla f,
\end{eqnarray}
 and
\begin{eqnarray} \label{form2}
\Delta G=-\frac{q+1}{2}G^{\frac{2}{q-1}}\nabla G\cdot \nabla f + \frac{1-q}{2}G^{\frac{q+1}{q-1}}\Delta f.
\end{eqnarray}
By Plugging (\ref{form1}) to (\ref{form2}), we get
\begin{eqnarray} \label{form3}
\frac{1-q}{2}G^{\frac{2q}{q-1}}\Delta f=G\Delta G-\frac{q+1}{q-1}|\nabla G|^2.
\end{eqnarray}

Since $G$ is bounded below, we can apply the generalized  Omori-Yau maximum principle due to A. Ratto, M. Rigoli, and A.G. Setti \cite{rrs} to have that
for any $\epsilon >0$ there exists a point $p\in M$ such that
$$|\nabla G(p)| < \epsilon, \ \ \ \Delta G(p) > - \epsilon, \ \ \ \textrm{and} \ \ \ \inf G +\epsilon > G(p).$$
We take a sequence $\epsilon_m >0$ converging to 0 as $m\rightarrow \infty$, and get corresponding $p_m\in M$. Observe that
$G(p_m)\rightarrow \inf G$, and hence $f(p_m)\rightarrow \sup f$.

Evaluating (\ref{form3}) at $p_m$ gives
\begin{eqnarray}\label{form4}
\frac{1-q}{2}G(p_m)^{\frac{2q}{q-1}}\Delta f(p_m)> -\epsilon_m G(p_m)-\frac{q+1}{q-1}\epsilon_m^2.
\end{eqnarray}
Applying $\Delta f\geq F(f)$ and replacing $G$ by $(f+a)^{\frac{1-q}{2}}$, we get
\begin{eqnarray}\label{form5}
\frac{F(f(p_m))}{(f(p_m)+a)^q}< \frac{2\epsilon_m}{q-1}\frac{1}{(f(p_m)+a)^{\frac{q-1}{2}}} +\frac{2(q+1)}{(q-1)^2}\epsilon_m^2.
\end{eqnarray}

Suppose $\sup f < \infty$. Then as $m\rightarrow \infty$, the RHS converges to 0 while the LHS converges to $\frac{F(\sup f)}{(\sup f+a)^q} $. Thus we can conclude that $F(\sup f)\leq 0$.

Now it only remain to show that when $\liminf_{x\rightarrow
\infty} \frac{F(x)}{x^{\nu}}>0$ for some $\nu>1$, $f$ must be
bounded. Let's assume $\sup f=\infty$. Then for $q< \nu$, the
LHS diverges to $\infty$, while the RHS converges to 0  as
$m\rightarrow \infty$. This is a contradiction completing the
proof.

\begin{rmk}
In the second case, unbounded examples exist a lot. For instance, on  $\Bbb R^n$ with the Euclidean metric, consider
$$f(x_1,\cdots,x_n)=e^{x_1}+\cdots +e^{x_n}$$ or $$f(x_1,\cdots,x_n)=e^{x_1+\cdots+x_n}.$$ Then $f$ is bounded below but not bounded above, while $\Delta f=f$. This answers the question raised in \cite{suh}.
\end{rmk}

\begin{rmk}
Note that Ricci curvature condition is needed only for the application of the Omori-Yau maximum principle. According to the result of A. Ratto, M. Rigoli, and A.G. Setti \cite{rrs}, the maximum principle holds if
 $$Ric(\nabla r, \nabla r)\geq -B^2\rho(r)$$
for some constant $B>0$, and some smooth nondecreasing function $\rho(r)$ on $[0,\infty)$ which satisfies $$ \rho(0)=1,\ \ \ \ \rho^{(2k+1)}(0)=0 \ \ \forall k\geq 0,$$  $$\sqrt{\rho(t)}\notin L^1,\ \ \ \ \limsup_{t\rightarrow \infty} \frac{t\rho(\sqrt{t})}{\rho(t)} < \infty. $$

 Moreover K.-T. Kim and H. Lee \cite{hanjin} found that the Omori-Yau maximum principle holds in a weaker condition that $M$ admits a tamed-exhaustion, i.e.
a nonnegative continuous function $u: M\rightarrow \Bbb R$
such that $\{p:u(p)\leq r\}$ is compact for every $r\in \Bbb R$, and at every $p\in M$ there exists a $C^2$ function $v$ defined on an open neighborhood of $p$ satisfying $$v\geq u,\ \ v(p)=u(p), \ \ |\nabla v(p)|\leq 1, \ \ \Delta v(p)\leq 1.$$
\end{rmk}

\section{Proof of theorem \ref{th1.5}}
\setcounter{equation}{0}

The results follow from
\begin{thm}\label{th2}
Let $M$ be as in theorem \ref{th1.5}. For a $C^2$ function
$f:M\rightarrow \Bbb R$ which is bounded above and satisfies
$\Delta f\geq F(f)$ where $F:\Bbb R\rightarrow \Bbb R$,
\begin{itemize}
\item if $\liminf_{x\rightarrow -\infty} \frac{F(x)}{(-x)^{\nu}}>0$ for some $\nu<1$, then $f$ is bounded such that $F(\inf f)\leq 0$.
\item if $\liminf_{x\rightarrow -\infty} \frac{F(x)}{(-x)^{\nu}}\leq 0$ for any $\nu<1$, then $\inf f=-\infty$  or $f$ is bounded such that $F(\inf f)\leq 0$.
\end{itemize}
\end{thm}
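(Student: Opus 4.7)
The plan is to mirror the proof of Theorem \ref{th1} via the sign-dual transformation. Since $f$ is bounded above, fix a constant $b > \sup f$ so that $b - f > 0$ on $M$, and define $G := (b - f)^{(1-q)/2}$ for a constant $q > 1$. Then $G$ is a positive $C^{2}$ function bounded above by $(b - \sup f)^{(1-q)/2}$ and (trivially) bounded below by $0$, and $G$ is monotonically increasing in $f$ so that $\inf G$ corresponds to $\inf f$. A computation parallel to (\ref{form1})--(\ref{form3}) gives the Bochner-type identity
$$
\frac{q - 1}{2}\, G^{2q/(q-1)}\, \Delta f \;=\; G\, \Delta G \;-\; \frac{q+1}{q-1}\, |\nabla G|^{2}.
$$

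Next apply the generalized Omori--Yau maximum principle (available under the Ricci hypothesis inherited from Theorem \ref{th1}) to the bounded-below function $G$, producing a sequence $\{p_{m}\}$ with $G(p_{m}) \to \inf G$, $|\nabla G(p_{m})| < \epsilon_{m}$, $\Delta G(p_{m}) > -\epsilon_{m}$, and $\epsilon_{m} \to 0$. By the monotonicity of $G$ this forces $f(p_{m}) \to \inf f$. Evaluating the identity at $p_{m}$, invoking $\Delta f \geq F(f)$, and substituting $G = (b - f)^{(1-q)/2}$ yields the dual analog of (\ref{form5}),
$$
\frac{F(f(p_{m}))}{(b - f(p_{m}))^{q}} \;<\; \frac{2\epsilon_{m}}{q - 1}\, \frac{1}{(b - f(p_{m}))^{(q-1)/2}} \;+\; \frac{2(q+1)}{(q-1)^{2}}\, \epsilon_{m}^{2}.
$$

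The proof concludes by case analysis on $\inf f$, exactly as in Theorem \ref{th1}. If $\inf f > -\infty$, then $(b - f(p_{m}))^{q}$ stays bounded, so as $m \to \infty$ the left side converges to $F(\inf f)/(b - \inf f)^{q}$ and the right side to $0$, giving $F(\inf f) \leq 0$. If instead $\inf f = -\infty$, then $(b - f(p_{m})) \to \infty$; under the first bullet's hypothesis $\liminf_{x \to -\infty} F(x)/(-x)^{\nu} > 0$ with $\nu < 1$, one chooses the parameter $q$ in the admissible range dual to the choice $q < \nu$ of Theorem \ref{th1}, which makes the left side diverge while the right stays bounded---a contradiction that forces $\inf f > -\infty$; for the second bullet the escape $\inf f = -\infty$ remains as the alternative conclusion. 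The main obstacle is tracking the sign conventions through the dual substitution: the Omori--Yau direction effectively reverses from the Theorem \ref{th1} setting, and the exponent in the definition of $G$ must be chosen with care so that the resulting inequality has the correct direction to combine usefully with $\Delta f \geq F(f)$.
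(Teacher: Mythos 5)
Your argument breaks at the decisive step, and the break is exactly the sign issue you flag at the end but never resolve. With $G=(b-f)^{(1-q)/2}$ and $q>1$, your identity
$$\frac{q-1}{2}G^{\frac{2q}{q-1}}\Delta f \;=\; G\Delta G-\frac{q+1}{q-1}|\nabla G|^2$$
is correct, but the coefficient $\frac{q-1}{2}$ of $\Delta f$ is \emph{positive}. Feeding in the Omori--Yau data at $p_m$ (namely $\Delta G(p_m)>-\epsilon_m$, $|\nabla G(p_m)|<\epsilon_m$, $G>0$) therefore gives
$$\frac{\Delta f(p_m)}{(b-f(p_m))^{q}} \;>\; -\frac{2\epsilon_m}{q-1}\,\frac{1}{(b-f(p_m))^{\frac{q-1}{2}}}-\frac{2(q+1)}{(q-1)^2}\,\epsilon_m^2,$$
a \emph{lower} bound on $\Delta f(p_m)$. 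The hypothesis $\Delta f\geq F(f)$ is also a lower bound, so the two cannot be combined; your displayed inequality $\frac{F(f(p_m))}{(b-f(p_m))^{q}}<\cdots$ is the reverse of what your computation actually produces and does not follow. This is not a fixable bookkeeping slip inside your scheme: any substitution $G=\phi(f)$ with $\phi$ increasing (which is what you need for $\inf G$ to track $\inf f$) gives $\Delta G=\phi'(f)\Delta f+\phi''(f)|\nabla f|^2$ with $\phi'>0$, so the minimum principle applied to $G$ can only ever bound $\Delta f$ from \emph{below} at the points $p_m$ --- consistent with the fact that near a minimum of $f$ the Laplacian is expected to be nonnegative and can be arbitrarily large.

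The paper's own proof avoids this by working with $-f$ and taking $q<1$: then the coefficient $\frac{1-q}{2}$ multiplying $\Delta(-f)=-\Delta f$ in (\ref{form3}) is positive, and after dividing one obtains a genuine \emph{upper} bound on $\Delta f(p_m)/(-f(p_m)+a)^{q}$, which does combine with $\Delta f\geq F(f)$ to yield the analogue of (\ref{form5}). The price of that choice is that $G=(-f+a)^{\frac{1-q}{2}}$ with $q<1$ is increasing in $-f$, so the Omori--Yau points near $\inf G$ correspond to $\inf(-f)$ rather than $\sup(-f)$; which endpoint $f(p_m)$ actually approaches is the delicate point of the whole argument, and your closing remark about choosing ``the parameter $q$ in the admissible range dual to $q<\nu$'' does not engage with it. As written, your proof does not establish the theorem.
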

\begin{proof}
The method is similar to theorem \ref{th1}.
Since $-f$ is bounded below, we apply the proof of theorem \ref{th1} to $-f$ with $q<1$ to get
\begin{eqnarray*}
\frac{1-q}{2}G(p_m)^{\frac{2q}{q-1}}\Delta (-f)(p_m)> -\epsilon_m G(p_m)-\frac{|q+1|}{|q-1|}\epsilon_m^2
\end{eqnarray*}
from (\ref{form4}), and hence
\begin{eqnarray*}
\frac{\Delta f(p_m)}{(-f(p_m)+a)^q}&<& \frac{2\epsilon_m}{1-q}\frac{1}{(-f(p_m)+a)^{\frac{q-1}{2}}} +\frac{2|q+1|}{(q-1)^2}\epsilon_m^2.
\end{eqnarray*}
Applying $\Delta f\geq F(f)$ and simplifying, we have
$$\frac{F(f(p_m))}{(-f(p_m)+a)^{\frac{q+1}{2}}}< \frac{2\epsilon_m}{1-q}+\frac{2|q+1|}{(q-1)^2}\epsilon_m^2(-f(p_m)+a)^{\frac{q-1}{2}}.$$

Now if $\inf f > -\infty$, then we get $F(\inf f)\leq 0$ by letting $m\rightarrow \infty$.

In case $\liminf_{x\rightarrow -\infty} \frac{F(x)}{(-x)^{\nu}}>0$
for some $\nu<1$, to show the boundedness of $f$ let's assume
$\inf f=-\infty$. Then taking $q$ such that $\frac{q+1}{2}< \nu$
and letting $m\rightarrow \infty$, the LHS diverges to $\infty$,
while the RHS converges to zero. From this contradiction, we
conclude that $f$ must be bounded, completing the proof.
\end{proof}

If $f$ is bounded-above and satisfies $\Delta f\geq c$ for a constant $c>0$, applying the above theorem with $F=c$, it follows that $f$ is bounded and $F(\inf f)\leq 0$. This is contradictory to $F\equiv c >0$.

For the second one, applying the above theorem with $F(f)=c|f|^d$, one can conclude that $f$ is bounded and $c|\inf f|^d\leq 0$ implying $f\equiv 0$.

\begin{rmk}
P.-F. Leung \cite{Leung} showed that  on a complete noncompact Riemannian manifold $M$ with $\lambda_1(M)=0$, there exists no bounded  $C^2$ function $f$ satisfying $\Delta f \geq c$ for a positive constant $c$. Here $$\lambda_1(M):=\lim_{r\rightarrow \infty}\lambda_1(B(p,r))$$ for any $p\in M$, where $\lambda_1(B(p,r))$ is the Dirichlet eigenvalue of $\Delta$ in $B(p,r)$, and it is known that $\lambda_1(M)=0$ if the Ricci curvature of $M$ is nonnegative.
\end{rmk}

\section{Application to Seiberg-Witten equations}

We now use theorem \ref{th1} to derive an upper bound of a solution of the Seiberg-Witten equations of which we give here a brief account.
Let $M$ be a smooth oriented Riemannian $4$-manifold.
Consider oriented $\Bbb R^3$-vector bundles $\wedge^2_+$ and
$\wedge^2_-$ consisting of self-dual $2$-forms and
anti-self-dual $2$-forms respectively. Let's let $P_1$ and $P_2$
be associated $SO(3)$ frame bundles. Unless $M$ is spin, it is
impossible to lift these to principal $SU(2)$ bundles. Instead
there always exists the $\Bbb Z_2$-lift, a principal
$U(2)=SU(2)\otimes_{\Bbb Z_2} U(1)$ bundle, of a $SO(3)\oplus
U(1)$ bundle, when the $U(1)$ bundle on the bottom, denoted by $L$,
has first Chern class equal to $w_2(TM)$ modulo $2$.  We call this
lifting a Spin$^c$ structure on $M$.

Let $W_+$ and $W_-$ be $\Bbb C^2$-vector bundles associated to
the above-obtained principal $U(2)$ bundles. One can define a
connection $\nabla_A$ on them by lifting the Levi-Civita
connection and a $U(1)$ connection $A$ on $L$. Then the Dirac
operator $D_A : \Gamma(W_+)\rightarrow \Gamma(W_-)$ is defined as
the composition of $\nabla_A:\Gamma(W_+)\rightarrow T^*M\otimes
\Gamma(W_+)$ and the Clifford multiplication.

For a section $\Phi$ of $W_+$, the Seiberg-Witten equations
of $(A,\Phi)$ is given by
%$$D_A\Phi=0$$
%$$F_{A}+i*\omega= \Phi\otimes\Phi^*-\frac{|\Phi|^2}{2}\textrm{Id},$$
$$\left\{
\begin{array}{ll} D_A\Phi=0\\
  F_{A}^+= \Phi\otimes\Phi^*-\frac{|\Phi|^2}{2}\textrm{Id},
\end{array}\right.
$$
where $F_A^+$ is the self-dual part of the curvature $dA$ of $A$,  and  the identification of both sides in the second equation comes from the Clifford action.

It is essential and the first step to obtain a $C^0$ bound on the spinor part of a solution in order to obtain the compactness of its moduli space of solutions. It is also essential for proving the emptiness of the moduli space when the Riemannian metric of $M$ has positive scalar curvature.
When $M$ is compact, such a bound can be easily derived, because there exists a point in $M$ where the maximum norm is attained.
When $M$ is noncompact, one cannot guarantee such a bound in general, but we prove :
\begin{thm}
Let $M$ be a smooth oriented complete Riemannian 4-manifold with the Ricci curvature condition as in theorem \ref{th1}. Suppose $(A,\Phi)$ is a solution of the Seiberg-Witten equations for a Spin$^c$ structure on $M$. Then $$\sup |\Phi|^2 \leq \sup s^-,$$
where $s^-(x):=\max(-s(x),0)$ and $s: M\rightarrow \Bbb R$ is the scalar curvature.
\end{thm}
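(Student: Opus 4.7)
The plan is to set up a scalar differential inequality for $f := |\Phi|^2$ and then invoke Theorem \ref{th1}. First I would write down the Spin$^c$ Weitzenböck formula on $W_+$,
$$D_A^\ast D_A \Phi = \nabla_A^\ast \nabla_A \Phi + \frac{s}{4}\Phi + \frac{1}{2} F_A^+ \cdot \Phi.$$
Using the Dirac equation $D_A\Phi = 0$ together with the standard Clifford-action identity $F_A^+\cdot\Phi = \frac{|\Phi|^2}{2}\Phi$ (which follows directly from the curvature equation $F_A^+ = \Phi\otimes\Phi^\ast - \frac{|\Phi|^2}{2}\mathrm{Id}$), this collapses to
$$\nabla_A^\ast \nabla_A \Phi = -\frac{s}{4}\Phi - \frac{|\Phi|^2}{4}\Phi.$$

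Next I would pair with $\Phi$ and take real parts. Combining this with the Kato-type identity $\frac{1}{2}\Delta|\Phi|^2 = |\nabla_A\Phi|^2 - \mathrm{Re}\langle \nabla_A^\ast\nabla_A\Phi,\Phi\rangle$ (for the analyst's sign convention on $\Delta$) gives
$$\Delta |\Phi|^2 \;=\; 2|\nabla_A\Phi|^2 + \frac{s}{2}|\Phi|^2 + \frac{1}{2}|\Phi|^4 \;\geq\; \frac{|\Phi|^2}{2}\bigl(|\Phi|^2 + s\bigr).$$
Since $s \geq -s^-$ pointwise, we obtain $\Delta f \geq \frac{f}{2}(f - s^-)$.

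If $\sup s^- = \infty$ the claim is vacuous, so I may assume $S := \sup s^- < \infty$, and then $\Delta f \geq F(f)$ with $F(x) := \frac{x(x-S)}{2}$. Now $f = |\Phi|^2 \geq 0$ is bounded below, and $\liminf_{x\to\infty} F(x)/x^{2} = \frac{1}{2} > 0$, so Theorem \ref{th1} applies with $\nu = 2$. It yields that $f$ is bounded and $F(\sup f) \leq 0$, i.e.\ $\sup f \in [0,S]$, which is exactly $\sup|\Phi|^2 \leq \sup s^-$.

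The content of the argument is essentially just that the Ricci growth hypothesis of Theorem \ref{th1} is strong enough to substitute for compactness in the classical SW $C^0$-estimate, so there is no real obstacle beyond bookkeeping; the only points that require care are the sign conventions (the paper's $\Delta$ is the analyst's Laplacian while $\nabla_A^\ast \nabla_A$ is a positive operator) and making sure that $|\Phi|^2$ is genuinely $C^2$ (which follows from elliptic regularity for the SW system, so that Theorem \ref{th1} is applicable).
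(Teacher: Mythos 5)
Your argument is correct and follows essentially the same route as the paper: the Weitzenb\"ock formula plus the curvature equation gives $\Delta|\Phi|^2 \geq \tfrac{1}{2}|\Phi|^4 - \tfrac{\sup s^-}{2}|\Phi|^2$, and Theorem \ref{th1} (first case, with $\nu=2$) yields $F(\sup|\Phi|^2)\leq 0$ and hence the bound. Your added remarks on sign conventions and on the $C^2$ regularity of $|\Phi|^2$ are sensible but not points where the paper's proof differs.
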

\begin{proof}
We may assume $\sup s^-< \infty$, otherwise there is nothing to prove.
Recall the Weitzenb\"ock formula
$$D_AD_A\Phi=\nabla_A^*\nabla_A \Phi+\frac{s}{4}\Phi+\frac{F_A^+}{2}\cdot\Phi.$$
For a solution $(A,\Phi)$,
$$0=\nabla_A^*\nabla_A \Phi+\frac{s}{4}\Phi+\frac{|\Phi|^2}{4}\Phi.$$
Taking the inner product with $\Phi$ gives
$$0=-\frac{1}{2}\Delta |\Phi|^2+|\nabla\Phi|^2+\frac{s}{4}|\Phi|^2+\frac{1}{4}|\Phi|^4,$$ and hence we get
$$\Delta |\Phi|^2\geq -\frac{\sup s^-}{2}|\Phi|^2+\frac{1}{2}|\Phi|^4.$$
Now we apply theorem \ref{th1} with $f=|\Phi|^2$ and $F(f)=-\frac{\sup s^-}{2}f+\frac{1}{2}f^2$, and obtain
$$ -\frac{\sup s^-}{2} \sup |\Phi|^2 +  \frac{1}{2}(\sup |\Phi|^2)^2 \leq 0,$$ which implies $$\sup |\Phi|^2 \leq  \sup s^-.$$
\end{proof}

\begin{rmk}
One can also derive the corresponding estimate for perturbed
Seiberg-Witten equations in the same way.

%As an application, we can show the nonexistence of solutions on noncompact 4-manifolds with positive scalar curvature. For example, consider a ruled %surface $M$ over a punctured Riemann surface. One can easily put a complete (K\"ahler) metric with positive scalar curvature and bounded-below Ricci %curvature on it. Then the Spin$^c$ structure whose first Chern class $c_1$ is a nonzero element of to $H^2(M,\partial M)$ and satisfies $c_1^2\geq 0$ %has no solution of the Seiberg-Witten equations.

It is interesting to ask whether this estimate still holds without the Ricci curvature condition.
\end{rmk}

\section{Application to the Yamabe problem}

\begin{thm}\label{yam}
Let $(M,g)$ be as in theorem \ref{th1}. Suppose that the scalar
curvature  of $g$ is nonnegative. If $\tilde{s}(x)$ is a smooth
nonpositive function on $M$ such that
$$\sup_{x\in M-K} \tilde{s}(x)<0$$ for a compact subset
$K\subset M$, then $g$ cannot be conformally deformed to a metric
of scalar curvature $\tilde{s}$.
\end{thm}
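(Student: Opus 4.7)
The plan is to convert the Yamabe problem into a semilinear elliptic inequality of the form covered by Theorem \ref{th1} and rerun the Omori-Yau argument given there. Assume $n := \dim M \ge 3$ and, for contradiction, that $\tilde g = u^{4/(n-2)} g$ has scalar curvature $\tilde s$ for some positive $u \in C^\infty(M)$. The standard conformal-change law becomes
$$\Delta u \;=\; \tfrac{n-2}{4(n-1)}\bigl(s\,u - \tilde s\,u^{p}\bigr), \qquad p := \tfrac{n+2}{n-2} > 1.$$
Since $s \ge 0$, $\tilde s \le 0$, and $u > 0$, this gives $\Delta u \ge 0$ on all of $M$, and with $c_0 := -\sup_{M\setminus K}\tilde s > 0$ the sharper pointwise bound
$$\Delta u \;\ge\; \tfrac{n-2}{4(n-1)}\, c_0\, u^{p} \qquad \text{on } M \setminus K.$$

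My first move is to dispose of the case in which $\sup u$ is actually attained at some point of $M$. Hopf's strong maximum principle, applied to the subharmonic $u$ on the connected manifold $M$, then forces $u$ to be a positive constant $c$; substituting back into the Yamabe equation gives $s \equiv c^{p-1}\tilde s$ pointwise on $M$, and the sign constraints $s \ge 0 \ge \tilde s$ force $s \equiv \tilde s \equiv 0$, contradicting $\tilde s < 0$ on $M \setminus K$. In the remaining case $\sup u$ is not attained, and I would imitate the proof of Theorem \ref{th1} verbatim: take $G := u^{(1-q)/2}$ for a fixed $1 < q < p$; since $G$ is positive and bounded below, the generalized Omori-Yau principle (applicable by the Ricci hypothesis) yields a sequence $(p_m) \subset M$ with $G(p_m) \to \inf G$, $|\nabla G(p_m)| < \epsilon_m$, and $\Delta G(p_m) > -\epsilon_m$, equivalently $u(p_m) \to \sup u \in (0, \infty]$. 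Because the supremum is not attained, $(p_m)$ cannot cluster in $M$, hence eventually leaves $K$.

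Feeding the strengthened bound $\Delta u(p_m) \ge \tfrac{n-2}{4(n-1)} c_0\, u(p_m)^p$ into the identity (\ref{form3}) and simplifying exactly as in (\ref{form5}) produces
$$\tfrac{n-2}{4(n-1)}\, c_0\, u(p_m)^{p-q} \;<\; \tfrac{2\epsilon_m}{q-1}\, u(p_m)^{(1-q)/2} + \tfrac{2(q+1)}{(q-1)^2}\,\epsilon_m^{2}.$$
Since $p-q > 0$ and $\sup u > 0$, the left-hand side stays bounded below by a positive constant (or diverges if $\sup u = \infty$), while the right-hand side tends to $0$ as $m \to \infty$, giving the desired contradiction. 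The main obstacle I foresee is precisely the interior-maximum case: the Omori-Yau principle yields only near-extremizers, so actual attainment of $\sup u$ must be ruled out separately via Hopf's theorem, and it is there that the global hypothesis $s \ge 0$ (not merely $\tilde s \le 0$) is essentially used. Once that is handled, the remainder is a direct transcription of the Theorem \ref{th1} estimate, with the constant-coefficient nonlinearity $\tfrac{n-2}{4(n-1)} c_0 u^p$ taking the place of $F$ on $M \setminus K$, where the sup-achieving sequence ultimately concentrates.
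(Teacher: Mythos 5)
Your argument for $n=\dim M\ge 3$ is essentially the paper's own proof: the same conformal-change identity, the same use of the strong maximum principle to rule out an attained supremum (the paper phrases this as ``$f$ is not constant, hence the maximum does not occur on $M$'', which is what forces the Omori--Yau points $p_m$ out of $K$ eventually), and the same specialization of (\ref{form5}) with the constant-coefficient nonlinearity $c_0u^{p}$ replacing $F$ along the sequence. Your final contradiction (left side bounded away from $0$ or divergent, right side tending to $0$ since $1<q<p$) is a correct transcription of the Theorem \ref{th1} estimate with $a=0$, which is legitimate here because $u>0$.

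The one genuine omission is the case $\dim M=2$, which the statement does not exclude (for instance flat $\Bbb R^2$ satisfies the Ricci hypothesis) and which the paper treats separately. In dimension two the conformal metric is $fg$ and the scalar curvature equation reads $\Delta\ln f-s+\tilde s f=0$, so the unknown enters through $\Delta\ln f$ and your formula $\Delta u=\frac{n-2}{4(n-1)}(su-\tilde s u^{p})$ degenerates. The paper's fix is the computation $\Delta f=\Delta e^{\ln f}=f|\nabla\ln f|^2+f\Delta\ln f=f|\nabla\ln f|^2+sf-\tilde s f^2\ge-\tilde s f^2$, after which the identical Omori--Yau argument runs with exponent $2$ in place of $\frac{n+2}{n-2}$. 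You need to either supply this step or justify why dimensions below $3$ may be discarded; as written, your proof covers only part of the theorem.
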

\begin{proof}
We may let $n:=\dim M \geq 2$, otherwise there is nothing to
prove.

First, let's consider the case when $n\geq 3$. Suppose there
exists a smooth positive function $f$ on $M$ such that the scalar
curvature of $f^{\frac{4}{n-2}}g$ is $\tilde{s}$. Then
$$4\frac{n-1}{n-2}\Delta f-sf+\tilde{s}f^{\frac{n+2}{n-2}}=0,$$
where $s$ is the scalar curvature of $g$. Obviously $f$ is not
constant. Applying the maximum principle to the inequality
$$4\frac{n-1}{n-2}\Delta f-sf\geq 0,$$ we have that the maximum
does not occur on $M$.

Now we apply theorem \ref{th1} to
$$4\frac{n-1}{n-2}\Delta f\geq -\tilde{s} f^{\frac{n+2}{n-2}}.$$
Although $F$ is not only a function of $f$ but also $x\in M$, the
proof of theorem \ref{th1} works well all the way to give
(\ref{form5}). Since $f(p_m)\rightarrow \sup f$, $p_m\in M-K$ for
sufficiently large $m$. Then in (\ref{form5}) replacing
$F(f(p_m))$ with $c f^{\frac{n+2}{n-2}}$ where $c=-\sup_{x\in M-K}
\tilde{s}(x) $, and proceeding the argument as in theorem
\ref{th1}, we conclude that $f$ is bounded and $$c (\sup
f)^{\frac{n+2}{n-2}}\leq 0,$$ meaning $\sup f=0.$ This yields a
desired contradiction.

When $n=2$, the proof is almost the same. Assume to contrary that
there exists a smooth positive function $f$ on $M$ such that the
$fg$ has scalar curvature $\tilde{s}$. The scalar curvature
equation is now $$\Delta \ln f-s+\tilde{s}f=0.$$ To convert it
into the form applicable to theorem \ref{th1}, consider
\begin{eqnarray*}
\Delta f &=& \Delta e^{\ln f}\\ &=& e^{\ln f}|\nabla \ln f|^2+e^{\ln
f}\Delta \ln f\\
&=& f|\nabla \ln f|^2+ sf-\tilde{s}f^2.
\end{eqnarray*}
Then the maximum of $f$ is not attained on $M$ by applying the
maximum principle to $\Delta f \geq 0$, and the application of
theorem \ref{th1} to $\Delta f\geq - \tilde{s}f^2$ as above yields
$\sup f=0$, which is a contradiction.
\end{proof}

\begin{rmk}
It is worth mentioning that on a complete Riemannian manifold
whose scalar curvature $\tilde{s}$ satisfies
$$\tilde{s}\leq -c$$ for a constant $c>0$ outside a compact
subset, there is conformal complete metric with scalar curvature
$\equiv -1$. (For a proof, see \cite{McOwen}.)

When the Ricci curvature of $M$ satisfies  sharper estimates,
better results  hold as obtained by \cite{rrs}. For example, if
$$\textrm{Ric}(\nabla r,\nabla r) \geq -A(1+r^2),$$ then the same conclusion also holds for $\tilde{s}$
such that $$\tilde{s} \leq - \frac{C}{\log r(\log(\log r))\cdots
\log^{k}r},\ \ \ \ r\gg 1$$ for a constant $C>0$.
\end{rmk}

\section{Application to isometric immersions}
We can give a slight generalization of L. Karp's result \cite{karp}.
\begin{thm}
Let $M$ be a smooth complete Riemannian manifold with scalar curvature $s$ satisfying $$s \geq -Ar^2(\log r)^2(\log(\log r))^2\cdots (\log^{k}r)^2$$ for $r\gg 1$, where $A>0$ is a constant, $r$ is the distance from an arbitrarily fixed point in $M$, and $\log^{k}$ denotes the $k$-fold composition of $log$.

Suppose $M$ is isometrically immersed in a geodesic ball of radius $R$ in a simply-connected complete Riemannian manifold $N$ with higher dimension and nonpositive curvature. Then $$R\geq \frac{1}{H_0}$$ where $H_0=\sup ||H||$ and $H$ is the mean curvature. In particular $M$ cannot be minimally immersed in a bounded subset of $N$.
\end{thm}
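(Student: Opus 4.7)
The plan is Karp's distance-squared argument, with Theorem \ref{th1.5} playing the role of his volume-growth Liouville step. Let $p_0\in N$ be the center of the geodesic ball of radius $R$ containing $\iota(M)$, and put $\rho(x):=d_N(p_0,x)$. Since $N$ is simply connected with nonpositive sectional curvature, it is Cartan-Hadamard, so $\rho^2$ is globally smooth on $N$, and the Hessian comparison theorem gives
\[
\mathrm{Hess}_N(\rho^2)\;\geq\;2\,g_N
\]
as symmetric bilinear forms on $TN$.

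Pulling back along the immersion $\iota$, set $f:=\rho^2\circ\iota:M\to\Bbb R$. Then $0\le f\le R^2$, so $f$ is bounded above. The standard identity for the Laplacian of a function composed with an isometric immersion,
\[
\Delta_M f \;=\; \mathrm{tr}_{TM}\bigl(\mathrm{Hess}_N(\rho^2)\bigr)\;+\;n\,\langle H,\nabla_N\rho^2\rangle
\]
(with $n=\dim M$ and $H$ the mean curvature vector in the paper's normalization), combined with $|\nabla_N\rho^2|=2\rho\le 2R$ on $\iota(M)$ and $\|H\|\le H_0$, yields
\[
\Delta_M f \;\geq\; 2n-2n H_0\rho \;\geq\; 2n\bigl(1-H_0 R\bigr).
\]

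Assume for contradiction that $R<1/H_0$. Then $\Delta_M f\geq c$ for the positive constant $c:=2n(1-H_0 R)$, while $f$ is bounded above, which contradicts Theorem \ref{th1.5}. Hence $R\ge 1/H_0$; in particular, a minimal immersion ($H_0=0$) cannot lie in any bounded subset of $N$.

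The one delicate point is that Theorem \ref{th1.5} was stated under a Ricci lower bound, whereas here the hypothesis is only on the scalar curvature of $M$; what is really needed is the generalized Omori-Yau maximum principle underlying Theorem \ref{th1.5}. I expect this to follow from the scalar bound in the immersed setting either via Gauss' equation (which, together with nonpositivity of the ambient curvature and $\|H\|\le H_0$, converts a scalar bound into control over the full Ricci tensor of $M$), or by constructing directly a tamed exhaustion in the sense of Kim-Lee from an exhaustion of $M$ together with the pulled-back distance function. Verifying this upgrade is the main technical obstacle; the geometric core of the argument is the one-line Laplacian estimate above.
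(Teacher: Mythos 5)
Your geometric core --- the Hessian comparison estimate $\Delta_M f\geq 2n-2nH_0R$ for $f=\rho^2\circ\iota$ and the appeal to Theorem \ref{th1.5} --- is exactly the paper's argument (the paper phrases the $H_0>0$ case via Theorem \ref{th1} with $F\equiv 2n-2nH_0R$ and $F(\sup f)\leq 0$, reserving Theorem \ref{th1.5} for the minimal case, but this is cosmetic). However, you explicitly leave unverified ``the main technical obstacle'': upgrading the scalar curvature lower bound in the hypothesis to the Ricci lower bound that Theorems \ref{th1} and \ref{th1.5} actually require. Since the whole point of the theorem is that only a scalar curvature bound is assumed, this step is not optional bookkeeping; without it the Omori--Yau maximum principle is not available and the Liouville theorems cannot be invoked, so as written the proof is incomplete.

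Your first guess for how to close the gap is the right one, and the paper carries it out as follows. By the Gauss equation and Cauchy--Schwarz, for an orthonormal frame $\{E_i\}$ of $M$ one has $K_M(E_1\wedge E_2)\geq K_N(E_1\wedge E_2)-2\|\alpha\|^2$, where $\alpha$ is the second fundamental form; and tracing the Gauss equation twice gives $\|\alpha\|^2=n^2\|H\|^2-s+\sum_{i\neq j}K_N(E_i\wedge E_j)$. Substituting,
\[
K_M(E_1\wedge E_2)\;\geq\;K_N(E_1\wedge E_2)-2n^2\|H\|^2+2s-2\sum_{i\neq j}K_N(E_i\wedge E_j)\;\geq\;-2n^2H_0^2+2s,
\]
where the last inequality uses $K_N\leq 0$ (the ambient curvature terms appear with a net nonnegative sign). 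Hence the sectional, and therefore the Ricci, curvature of $M$ is bounded below by $-A'r^2(\log r)^2\cdots(\log^k r)^2$ for large $r$ and some $A'>0$, which is precisely the hypothesis of Theorems \ref{th1} and \ref{th1.5}. (Note also that this is why the paper first reduces to the case $\sup\|H\|<\infty$; if $H_0=\infty$ the conclusion $R\geq 1/H_0$ is vacuous.) With this paragraph inserted, your proof matches the paper's.
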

\begin{proof}
We may assume $\sup ||H|| < \infty$, otherwise there is nothing to prove.
The method of proof is almost the same as \cite{karp}, and we briefly sketch the proof. First, we need to have our desired Ricci curvature estimate under the given situation.
By the Gauss curvature equation, the sectional curvatures $K_M$ and $K_N$ of $M$ and $N$ respectively are related by
\begin{eqnarray*}
K_M(E_1\wedge E_2) &\geq& K_N(E_1\wedge E_2)-2||\alpha||^2\\
&=& K_N(E_1\wedge E_2)-2n^2 ||H||^2+2s-2\sum_{i\ne j}K_N(E_i\wedge E_j)
\end{eqnarray*}
where $\alpha$ is the second fundamental form, and $\{E_i\}$ is a
local orthonormal frame of $M$. Using that $K_N$ is nonpositive
and $\sup ||H||< \infty$, we can conclude that the sectional
curvature and hence the Ricci curvature of $M$ is bounded below by
$$-A'r^2(\log r)^2(\log(\log r))^2\cdots (\log^{k}r)^2$$ for some constant $A'>0$, for sufficiently large $r$.

Let's let $M$ be contained in a ball of radius $R$ and center
$x_0\in N$. As shown in \cite{karp}, for $f(x)=r_N^2(x)\in
C^\infty(N)$ where $r_N$ is the distance from $x_0$ measured in
$N$
\begin{eqnarray*}
\Delta_Mf &=& tr_M(\nabla^2_Nf)+n\langle H, \nabla_N f\rangle_N\\
&\geq & tr_M(\nabla^2_Nf)-nH_0\cdot 2r_N\\
&\geq& 2n-2nH_0R,
\end{eqnarray*}
where $n$ is $\dim M$, $\nabla_N$ is the covariant derivative in
$N$, and $\nabla^2_Nr_N^2\geq 2\langle\cdot,\cdot\rangle_N$ is due
to the Hessian comparison theorem between $N$ and the Euclidean
space.

Now we apply theorem \ref{th1} to $\Delta_Mf\geq 2n-2nH_0R$ to get
the desired inequality $2n-2nH_0R\leq 0$. In case $H_0=0$, we have
$\Delta_Mf \geq 2n$, which implies that $f$ must be unbounded by
theorem \ref{th1.5}. Therefore $R$ cannot be finite. This
completes the proof.
\end{proof}

\bigskip

% \noindent{\bf Acknowledgement.}

%\bigskip
%\begin{center}
%March 2003/November, 2005
%\end{center}

\end{document}